\newtheorem{thm}{Theorem}
\newtheorem{lem}[thm]{Lemma}
\newtheorem{cor}[thm]{Corollary}
\numberwithin{thm}{section}
\numberwithin{equation}{section}
\theoremstyle{definition}
\newcommand{\rat}{\mathbb Q}
\newcommand{\com}{\mathbb C}
\newcommand{\alg}{\overline\rat}
\newcommand{\proj}{\mathbb P}
\newcommand{\intg}{\mathbb Z}
\newcommand{\x}{{\bf x}}
\newcommand{\m}{{\bf m}}
\newcommand{\xij}{x_{i_0j_0}}
\newcommand{\dij}{d_{i_0j_0}}
\newcommand{\mij}{m_{i_0j_0}}
\title{Lower bounds on the projective heights of algebraic points}
\author{Charles L. Samuels}
\address{Department of Mathematics, University of Texas at Austin, 1 University Station C1200
  Austin, TX 78712}
\email{csamuels@math.utexas.edu}
\subjclass[2000]{Primary 11R04}
\keywords{Projective Height, Weil Height, Lehmer's Problem}
\begin{document}

\begin{abstract}
  If $\alpha_1,\ldots,\alpha_r$ are algebraic numbers such that $$N=\sum_{i=1}^r\alpha_i
  \ne \sum_{i=1}^r\alpha_i^{-1}$$ for some integer $N$, then a theorem of Beukers and Zagier 
  \cite{BeukersZagier}
  gives the best possible lower bound on $$\sum_{i=1}^r\log h(\alpha_i)$$
  where $h$ denotes the Weil Height.  We will extend
  this result to allow $N$ to be any totally real algebraic number.  Our generalization
  includes a consequence of a theorem of Schinzel \cite{Schinzel} which bounds the height of
  a totally real algebraic integer.
\end{abstract}

\baselineskip=7mm
\maketitle
\section{Introduction}

Let $K$ be any number field and $v$ a place of $K$ extending the place $p$ of $\rat$.
Let $K_v$ denote the completion of $K$ at $v$ and $\rat_p$ the completion of $\rat$ at $p$.
Write $D_v=[K_v:\rat_p]$ and $D=[K:\rat]$ for the local and global degrees.
Let $\|\cdot\|_v$ be the unique absolute value on $K_v$ that extends the $p$-adic absolute
value on $\rat_p$ and define $|\cdot|_v=\|\cdot\|_v^{D_v/D}$.  
We note that $|\cdot|_v$ satsifies the product formula
\begin{equation*}
  \prod_v|\alpha|_v=1
\end{equation*}
for all $\alpha\in K^{\times}$ where the sum is taken over all places $v$ of $K$.  

Define the {\it projective height} of a point $\x=(x_0,\ldots,x_n)\in\proj^n(K)$ by
\begin{equation*}
  \log H(\x) = \sum_v\log\max_i|x_i|_v
\end{equation*}
and note that by the product formula $H$ is well defined on $\proj^n(K)$.
By our choice of absolute values, the definition of $H$ does
not depend on $K$ and therefore defines
a function on $\proj^n(\alg)$.  We define the {\it Weil Height} $h(\alpha)$ of a point 
$\alpha\in\alg$ by $h(\alpha) = H((1,\alpha))$.

By Kronecker's Theorem, $\log h(\alpha)\geq 0$ with equality if and only if $\alpha$ 
is $0$ or a root of unity.  
In 1933, Lehmer \cite{Lehmer} asked whether there exists a constant $\rho>1$ such that 
\begin{equation} \label{LehmersConjecture}
  \deg(\alpha)\log h(\alpha)\geq \log\rho
\end{equation}
in all other cases.  In particular, he asked whether we may take $\rho$ to be
the larger real root of $x^{10}+x^9-x^7-x^6-x^5-x^4-x^3+x+1$. 

Lehmer's problem is still open today though an affirmative answer has been given for certain classes
of algebraic numbers.  Smyth \cite{Smyth} proved that if $\alpha\ne 0$ and the minimal polynomial of $\alpha$
is not reciprocal then \eqref{LehmersConjecture} holds with $\rho$ the real root of $x^3-x-1$.  
It is a consequence of a theorem of Schinzel \cite{Schinzel} that if $\alpha$ is totally real we may take 
$\log\rho=\frac{1}{2}\log\frac{1+\sqrt 5}{2}$.  If we further assume that $\alpha$ is an algebraic
integer then the same bound holds without $\deg(\alpha)$ appearing on the left 
hand side of \eqref{LehmersConjecture}.  In this case, Schinzel's lower bound is best
possible by taking $\alpha=\frac{1+\sqrt 5}{2}$.  The best unconditional result
toward answering Lehmer's problem is a theorem of Dobrowolski \cite{Dobrowolski} which gives
a lower bound on $\deg(\alpha)\log h(\alpha)$ which tends to $0$ slowly as $\deg(\alpha)\to\infty$.

In a slightly different direction, Zhang \cite{Zhang} showed that there exists $\rho> 1$
such that
\begin{equation} \label{ZhangsTheorem}
  \log h(\alpha)+\log h(1-\alpha)\geq\log\rho
\end{equation}
whenever $\alpha$ is not $0$, $1$ or a primitive $6$th root of unity.
Zagier \cite{Zagier} used elementary methods to show that \eqref{ZhangsTheorem} holds with
$\log\rho=\frac{1}{2}\log\frac{1+\sqrt 5}{2}$ with cases of equality identified.  As Zagier notes,
it is interesting that this is the same lower bound that appears in Schinzel's bound \cite{Schinzel}
on the height of a totally real algebraic integer.
Our goal is to show that the results of Schinzel and Zagier are in fact 
consequences of a more general theorem.

Our proof will apply the methods of Beukers and Zagier \cite{BeukersZagier} who 
generalized the results of \cite{Zagier} in
the following way.  Let $\alpha_1,\ldots,\alpha_r$ be non-zero algebraic numbers such that
$\alpha_1+\cdots +\alpha_r=N$ and $\alpha_1^{-1}+\cdots +\alpha_r^{-1}\ne N$ for some 
integer $N$.  Then
\begin{equation} \label{BZBound}
  \sum_{i=1}^r\log h(\alpha_i) \geq \frac{1}{2}\log\frac{1+\sqrt 5}{2}
\end{equation}
with cases of equality.   We will further generalize this theorem
so that $N$ may be any totally real algebraic integer.  Then by taking $r=1$ we are
able to recover Schinzel's result.

\section{Main Results}
Suppose that $r, n_1, \ldots, n_r$ are positive integers and $K$ is a field.  Then we write
$\mathcal P(K)=\proj^{n_1}(K)\times\cdots\times\proj^{n_r}(K)$ and denote the coordinates
by $\x=(\x_0,\ldots,\x_r)$ with $\x_i=(x_{i0},\ldots,x_{in_i})$.  If $\x$ has
$x_{ij}\ne 0$ for all $i,j$ let $\x^{-1}$ be the point
obtained by replacing each coordinate $x_{ij}$ of $\x$ with $x_{ij}^{-1}$.
Following \cite{BeukersZagier}, choose any subset $I$ of $\{i|n_i=1\}$ and let $E=\{(i,0)|i\in I\}$.  
We refer to $E$ as the set of {\it exceptional index pairs}.  Index pairs not in $E$ are called
{\it regular index pairs}.
If a regular index pair appears in a monomial of a polynomial $Q(\x)$, then we say the monomial
is a {\it regular monomial} of $Q$.  Otherwise,  the monomial is called an {\it exceptional monomial}.
Also write $\|Q\|_v$ to denote the sum of the $v$-adic absolute values 
(using $\|\cdot\|_v$) of the coefficients of $Q$.

Let $F$ be a multihomogeneous polynomial over $\alg$ of multidegrees $d_1,\ldots,d_r$
so that $F$ defines a zero set in $\mathcal P(\alg)$.  The degree of $F$ in the variable $x_{ij}$ is
denoted $d_{ij}$ and define $\tilde d_i=-d_i+\sum_jd_{ij}$.  Then we set
\begin{equation*}
  \delta=\max\left\{\max_{i\in I}\left\{\frac{\tilde d_i+d_{i1}}{n_i+1}\right\}, 
  \max_{i\not\in I}\left\{\frac{\tilde d_i}{n_i+1}\right\}\right\}
\end{equation*}
and assume that $F$ has the following properties:
\begin{itemize}
  \item[{\it (i)}] the coefficients of $F$ are totally real algebraic integers
  \item[{\it (ii)}] the coefficients of regular monomials of $F$ are integers.
\end{itemize}
Then for $v$ Archimedean define
\begin{equation*}
  c(F,v,i,j)=\left\|\frac{\partial F}{\partial x_{ij}}\right\|_v.
\end{equation*}

In \cite{BeukersZagier}, Beukers and Zagier consider only polynomials $F$ having integer coefficients,
so clearly $c(F,v,i,j)$ does not depend on the place $v$.  In fact, $c(F,v,i,j)$ is defined in
\cite{BeukersZagier} using the usual absolute value on the complex numbers rather than $\|\cdot\|_v$.
Since we assume only the weaker conditions
{\it (i)} and {\it (ii)}, $c(F,v,i,j)$ may indeed depend on $v$ as the notation suggests.
Therefore, we require the absolute value $\|\cdot\|_v$ in this definition.

However, in the special case that $(i,j)\not\in E$, $c(F,v,i,j)$ depends only on the 
regular monomials of $F$.  So by property {\it (ii)}, $c(F,v,i,j)$ depends only on the monomials 
of $F$ having integer coefficients, and therefore, does not depend on $v$.  Then we may define
\begin{equation*}
  C_F = C_F(E)= \max_{(i,j)\not\in E}c(F,v,i,j)
\end{equation*}
and by our remarks above, $C_F$ does not depend on $v$.  
We now state our main theorem which is a direct generalization of the main theorem in \cite{BeukersZagier}.

\begin{thm} \label{main}
  Let $F$ be a multihomogeneous polynomial satisfying properties {\it (i)} and {\it (ii)}
  above for some exceptional set $E$.
  If $\x\in \mathcal P(\alg)$ is such that $F(\x)=0$, $\prod_{i,j}x_{ij}\ne 0$ and 
  $F(\x^{-1})\ne 0$ then
  \begin{equation*}
    \sum_{i=1}^{r}(n_i+1)\log H(\x_i) \geq \log\rho
  \end{equation*}
  where $\rho$ is the unique real root larger than $1$ of 
  $x^{-2}+C_F^{-1}x^{-\delta} =1$.
\end{thm}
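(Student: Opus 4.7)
The plan is to adapt the argument of Beukers and Zagier \cite{BeukersZagier} to the present setting, in which the coefficients of $F$ are permitted to be totally real algebraic integers rather than rational integers. Fix a number field $K$ containing the coefficients of $F$ together with all coordinates $x_{ij}$. Since $F(\x^{-1}) \neq 0$, the product formula applied to this nonzero element of $K^\times$ gives
$$\prod_v |F(\x^{-1})|_v = 1,$$
the product taken over all places of $K$. The goal is to bound each factor from above in terms of the local heights of the $\x_i$ together with $C_F$ and to combine these bounds multiplicatively to extract a lower bound on $\sum_i (n_i+1)\log H(\x_i)$.

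At a non-archimedean place $v$, condition {\it (i)} gives $|a_\m|_v \le 1$ for every coefficient of $F$, so the ultrametric inequality produces a bound on $|F(\x^{-1})|_v$ expressible in terms of the local factors $\max_j|x_{ij}|_v$ and the multidegree exponents $d_{ij}$. At an archimedean $v$, I would start from the identity $F(\x^{-1}) = F(\x^{-1}) - F(\x)$ and parameterize a path from $\x$ to $\x^{-1}$ (say coordinate-wise $x_{ij}(t)=x_{ij}^{1-2t}$), integrating the differential to express the difference as a sum over $(i,j)$ of integrals of $\partial F/\partial x_{ij}$ along this path. Each such integral is controlled by $c(F,v,i,j)$ in sup-norm. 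For regular $(i,j)\notin E$ the contribution is uniformly at most $C_F$, independent of $v$; for exceptional $(i,0)\in E$ the factor $c(F,v,i,0)$ is $v$-dependent and must be tracked globally.

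Combining the local bounds, invoking $\prod_v |F(\x^{-1})|_v = 1$, and using a second application of the product formula to control the archimedean product of the $v$-dependent constants $c(F,v,i,0)$ via the totally real algebraic integrality of the exceptional coefficients, I expect to arrive at a global inequality of the form
$$1 \le e^{-2S} + C_F^{-1} e^{-\delta S}, \qquad S := \sum_{i=1}^{r}(n_i+1)\log H(\x_i).$$
Here the exponent $2$ records the pairing $|x_{ij}|_v\cdot|x_{ij}^{-1}|_v = 1$ contributing to the regular part, while $\delta$ encodes the net degree imbalance $\tilde d_i$, augmented by $d_{i1}$ for $i\in I$ to account for the asymmetric role of the exceptional variable in $\proj^1$. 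Since the right-hand side is strictly decreasing in $S$ and equals $1$ precisely at $S=\log\rho$, this forces $S\ge\log\rho$.

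The principal obstacle is the careful bookkeeping at exceptional index pairs. Because an exceptional monomial may carry a totally real algebraic integer coefficient that is not an ordinary integer, the constants $c(F,v,i,0)$ genuinely depend on $v$, and one cannot replace them by a single $v$-independent constant; the totally-real condition and a supplementary product-formula argument are required to absorb these factors. This is the mechanism forcing the asymmetric definition of $\delta$ in the $i\in I$ case, and it is the principal feature distinguishing the present theorem from its integer-coefficient analogue in \cite{BeukersZagier}.
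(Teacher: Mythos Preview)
Your plan for the exceptional index pairs is where the argument breaks. You propose to carry the $v$-dependent constants $c(F,v,i,0)$ through the global estimate and then ``absorb'' them by a second product-formula step. But $c(F,v,i,0)=\sum_{\m}m_{i0}\|s_{\m}\|_v$ is a \emph{sum} of archimedean absolute values of algebraic integers; the product formula gives no upper bound on $\prod_{v\mid\infty}c(F,v,i,0)^{D_v/D}$, and this product can be made arbitrarily large by choosing exceptional coefficients of large height. The paper's mechanism is different and is the real content of the exceptional apparatus. At an archimedean $v$ one first reduces (by a compactness lemma) to a point of $X(K_v)_1$ at which every coordinate except a single $x_{i_0j_0}$ lies on the unit circle, so that $\bar x_{ij}=x_{ij}^{-1}$ for $(i,j)\ne(i_0,j_0)$; one then uses $F(\x^{-1})-F(\bar\x)$, with the totally real hypothesis entering through $F(\bar\x)=\overline{F(\x)}=0$ rather than through any global averaging. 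When $(i_0,j_0)\in E$ one has $j_0=0$ and $n_{i_0}=1$, and the relation $m_{i_00}+m_{i_01}=d_{i_0}$ lets one rewrite the estimate so that it is controlled by $c(F,v,i_0,1)$ instead of $c(F,v,i_0,0)$. Since $(i_0,1)\notin E$, condition \textit{(ii)} makes this constant $v$-independent and bounded by $C_F$. The price is an extra factor $\|x_{i_00}\|_v^{-d_{i_01}}$, and absorbing it into the weights is exactly what forces the shifted term $(\tilde d_i+d_{i1})/(n_i+1)$ in the definition of $\delta$.

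There is also a structural gap. The paper does not apply the product formula to $F(\x^{-1})$ alone but to a weighted combination $\prod_{i,j}x_{ij}^{a_{ij}}\cdot\tilde F(\x)^b$, where $\tilde F(\x)=F(\x^{-1})\prod_{i,j}x_{ij}^{d_{ij}}$ and the free parameters satisfy $\sum_j a_{ij}+b\tilde d_i=n_i+1$; this constraint is what places the factor $n_i+1$ on the left-hand side. The two-term shape $x^{-2}+C_F^{-1}x^{-\delta}=1$ then comes from a one-variable optimization over $b$ (the paper's calculus lemma), not from any intrinsic two-term decomposition of $F(\x^{-1})-F(\x)$. Your path integral would produce one summand per coordinate pair $(i,j)$, and no mechanism is given for collapsing that sum to the asserted inequality $1\le e^{-2S}+C_F^{-1}e^{-\delta S}$.
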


Once again, we note that our theorem generalizes \cite{BeukersZagier} in that we allow the
coefficients of $F$ to come from a potentially larger set.  While the main theorem
in \cite{BeukersZagier} requires these coefficients to be integers, we allow some
of them to be any totally real algebraic integers.

Before we prove Theorem \ref{main} we demonstrate its relationship to our problem.  
Consider $r$ non-zero
algebraic numbers $\alpha_1,\ldots,\alpha_r$ such that $\alpha_1+\cdots +\alpha_r=N$
and $\alpha_1^{-1}+\cdots +\alpha_r^{-1}\ne N$.  
Corollary 2.1 of \cite{BeukersZagier} gives a lower bound on $\sum_{i=1}^r\log h(\alpha_i)$
when $N$ is an integer.  We apply Theorem \ref{main} to prove a direct generalization of
this result.

\begin{cor} \label{SumIsN}
  Suppose $\alpha_1,\ldots ,\alpha_r$ are non-zero algebraic numbers and $N$ is a
  totally real algebraic integer.  If $\alpha_1+\cdots +\alpha_r=N$ and
  $\alpha_1^{-1}+\cdots +\alpha_r^{-1}\ne N$ then
  \begin{equation*}
    \sum_{i=1}^r\log h(\alpha_i) \geq \frac{1}{2}\log\frac{1+\sqrt 5}{2}
  \end{equation*}
  with equality when $r=1$ and $\alpha_1=\frac{1+\sqrt 5}{2}$.
\end{cor}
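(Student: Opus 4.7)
The plan is to apply Theorem \ref{main} to a carefully chosen multihomogeneous polynomial that encodes the relation $\sum_i \alpha_i = N$. Specifically, take $r$ copies of $\proj^1$ (so $n_i=1$ for all $i$) with coordinates $\x_i = (x_{i0}, x_{i1})$, and set
\begin{equation*}
  F(\x) = \sum_{i=1}^r x_{i1}\prod_{j\ne i}x_{j0} \;-\; N\prod_{i=1}^r x_{i0}.
\end{equation*}
This is multihomogeneous of multidegree $(1,\ldots,1)$ and its vanishing on the point $\x_i=(1,\alpha_i)$ is exactly $\sum_i \alpha_i = N$. The hypothesis $\sum_i \alpha_i^{-1}\ne N$ then becomes $F(\x^{-1})\ne 0$, and nonvanishing of all $x_{ij}$ follows from $\alpha_i\ne 0$. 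Finally, $H(\x_i) = h(\alpha_i)$, so the conclusion of Theorem \ref{main} will translate directly into the desired height inequality once the factor $n_i+1=2$ is absorbed.

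The key step is then to choose the exceptional set $E$ so that the hypotheses (i) and (ii) are met. Since $N$ is only guaranteed to be a totally real algebraic integer, the monomial $N\prod_i x_{i0}$ may fail to have an integer coefficient; to make it \emph{exceptional} (so the integrality demand does not apply) I would take
\begin{equation*}
  E = \{(i,0) : 1\le i \le r\},
\end{equation*}
which is permissible because every $n_i=1$. Every other monomial of $F$ involves some $x_{i1}$ with $(i,1)\notin E$, hence is regular with coefficient $1$, so (ii) is satisfied; (i) is satisfied because $N$ is a totally real algebraic integer.

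Next I would compute the parameters entering Theorem \ref{main}. Each variable appears to degree $1$ in $F$, so $d_{ij}=1$ and $\tilde d_i = -1 + 1 + 1 = 1$. Every $i$ is in $I$, giving
\begin{equation*}
  \delta = \max_{i\in I}\frac{\tilde d_i + d_{i1}}{n_i+1} = \frac{1+1}{2} = 1.
\end{equation*}
For $(i,j)\notin E$, i.e.\ $j=1$, the derivative $\partial F/\partial x_{i1} = \prod_{j\ne i}x_{j0}$ is a single monomial with coefficient $1$, hence $c(F,v,i,1)=1$ for every Archimedean $v$, giving $C_F=1$. The equation $x^{-2}+x^{-1}=1$ then has unique real root $\rho=(1+\sqrt 5)/2$ larger than $1$, so Theorem \ref{main} yields
\begin{equation*}
  2\sum_{i=1}^r \log h(\alpha_i) = \sum_{i=1}^r(n_i+1)\log H(\x_i) \ge \log\frac{1+\sqrt 5}{2},
\end{equation*}
which is the desired bound.

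The main obstacle is conceptual rather than computational: one must recognize that the role of $N$ being merely a totally real algebraic integer is precisely what forces the coefficient of the $\prod_i x_{i0}$ monomial out of $\intg$ and into the exceptional set, and that this is exactly what the generalization in Theorem \ref{main} was designed to accommodate. The equality case is then verified by direct computation: for $r=1$ and $\alpha_1 = \gr$, the minimal polynomial $x^2-x-1$ has one root inside and one outside the unit circle, giving $\log h(\alpha_1) = \tfrac12 \log\gr$, while $\alpha_1^{-1} = \gr - 1 \ne \gr$ confirms the hypothesis.
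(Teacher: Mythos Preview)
Your proof is correct and follows essentially the same approach as the paper: the same polynomial $F$, the same exceptional set $E=\{(i,0):1\le i\le r\}$, and the same computation of $C_F=1$ and $\delta=1$ leading to $\rho=\gr$. Your write-up is in fact a bit more explicit than the paper's in verifying the regular/exceptional classification of the monomials and in checking the equality case.
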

\begin{proof}
  Write $\alpha_i=\alpha_{i1}$ for all $i$ and suppose that the $\alpha_{i0}$ are algebraic numbers.
  We consider the point 
  \begin{equation*}
    {\bf \alpha} = (\alpha_{10},\alpha_{11})\times\cdots\times(\alpha_{r0},\alpha_{r1})
    \in(\mathbb P^1(\alg))^r.
  \end{equation*}
  We will apply Theorem \ref{main} to this point with $I=\{1,\ldots,r\}$ so we
  have $E=\{(1,0),\ldots,(r,0)\}$.  
  Let $F$ be the homogeneous version of $x_{10}+\cdots+x_{r0}-N$.  That is,
  \begin{equation*}
    F(\x)=\sum_{i=1}^r x_{i1}\prod_{j\ne i}x_{j0} -N \prod_{j}x_{j0}
  \end{equation*}
  and note that $F$ satisfies properties {\it (i)} and {\it (ii)}.
  It is clear that $c(F,v,i,j)=1$ for all $(i,j)\not\in E$
  so that $C_F=1$.  We also have $n_i=1$, $d_i=1$ and $d_{i1}=1$ so that
  $\delta =1$.  Then by Theorem \ref{main}
  \begin{equation*}
    \sum_{i=1}^r 2\log H(\alpha_{i0},\alpha_{i1}) \geq \log\rho
  \end{equation*}
  where $\rho$ is the real root larger than $1$ of $x^{-2}+x^{-1}=1$.  Setting
  $\alpha_{i0} =1$ for all $i$ the result follows and the case of equality is clear.
\end{proof}

Note that the case of equality in Corollary \ref{SumIsN} is not unique.  For example, we also
have equality when $r=2$, $\alpha_1=1$ and $\alpha_2=\frac{1+\sqrt 5}{2}-1$.  Several
other cases of equality are given in \cite{BeukersZagier} and \cite{Zagier} using
integer values for $N$.

In the special case that $r=1$ Corollary \ref{SumIsN} implies that
$\log h(\alpha)\geq\frac{1}{2}\log\frac{1+\sqrt 5}{2}$ for all totally real
algebraic integers $\alpha\not\in\{0,\pm 1\}$.  
Therefore, Schinzel's bound \cite{Schinzel} on the height of a totally real algebraic integer
is a corollary of our result.
\begin{cor}
  If $\alpha$ is a totally real algebraic integer with $\alpha\not\in\{\pm 1,0\}$, then
  \begin{equation*}
    \log h(\alpha)\geq\frac{1}{2}\log\frac{1+\sqrt 5}{2}.
  \end{equation*}
\end{cor}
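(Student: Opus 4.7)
The plan is to deduce this directly from Corollary \ref{SumIsN} by taking $r=1$ and choosing $N=\alpha$ itself. Since $\alpha$ is assumed to be a totally real algebraic integer, it is a legitimate choice for the parameter $N$ in the corollary.

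First I would set $r=1$, $\alpha_1=\alpha$, and $N=\alpha$. The equation $\alpha_1+\cdots+\alpha_r=N$ then reduces to the tautology $\alpha=\alpha$, and the non-degeneracy hypothesis $\alpha_1^{-1}+\cdots+\alpha_r^{-1}\ne N$ becomes $\alpha^{-1}\ne\alpha$, i.e., $\alpha^2\ne 1$. The conditions $\alpha\notin\{\pm 1,0\}$ from the hypothesis are precisely what is needed: $\alpha\ne 0$ ensures that $\alpha_1$ is a nonzero algebraic number and that $\alpha^{-1}$ makes sense, while $\alpha\ne\pm 1$ ensures $\alpha^{-1}\ne\alpha$. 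Since $\alpha$ is a totally real algebraic integer by hypothesis, $N=\alpha$ satisfies the hypothesis on $N$ in Corollary \ref{SumIsN}.

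Having verified all the hypotheses, Corollary \ref{SumIsN} immediately yields
\begin{equation*}
  \log h(\alpha)=\sum_{i=1}^{1}\log h(\alpha_i)\geq\frac{1}{2}\log\frac{1+\sqrt 5}{2},
\end{equation*}
which is the desired inequality. There is no real obstacle here: the whole content is in recognizing that the totally real integer $N$ can be taken to be $\alpha$ itself, at which point the one-variable case $r=1$ of Corollary \ref{SumIsN} reduces exactly to Schinzel's bound. This is why the generalization to totally real $N$ in the main theorem was arranged in the first place.
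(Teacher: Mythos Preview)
Your proof is correct and is exactly the argument the paper indicates: apply Corollary~\ref{SumIsN} with $r=1$, $\alpha_1=\alpha$, and $N=\alpha$, using $\alpha\ne 0$ for the nonvanishing hypothesis and $\alpha\ne\pm 1$ for the condition $\alpha^{-1}\ne N$. The paper states only that the $r=1$ case of Corollary~\ref{SumIsN} yields the result; you have simply written out the verification of the hypotheses.
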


\section{Proof of Theorem \ref{main}}
We begin with some additional notation.
Recall that for a point $\x\in\mathcal P(K)$ for some field $K$ we denote the coordinates
$\x=(\x_1,\ldots,\x_r)$ with $\x_i=(x_{i0},\ldots,x_{in_i})$.  Similarly, for
a point $\m\in\intg^{n_1+1}\times\cdots\times\intg^{n_r+1}$ we set
$\m=(\m_1,\ldots,\m_r)$ with $\m_i=(m_{i0},\ldots,m_{in_i})$.
Define the product $\x^\m= \prod_{i,j}x_{ij}^{m_{ij}}$ and the set
\begin{equation*}
  M=\left\{\m\in\intg^{n_1+1}\times\cdots\times\intg^{n_r+1}\left|
  m_{ij}\geq 0,\ \sum_jm_{ij}=d_i\ \forall i\right.\right\}
\end{equation*}
so that the polynomial $F$ may be written
\begin{equation*}
  F(\x)=\sum_{\m\in M}s_{\m}\x^\m
\end{equation*}
where the $s_\m$ are totally real algebraic integers.  Let $\{G_k(x)\}$ be a finite
collection of multihomogeneous polynomials over $K$ with algebraic integer
coefficients.  Assume that $G_k$ has multidegrees $d_{k1},\ldots,d_{kr}$.
As above, we define the sets
\begin{equation*}
  M_k=\left\{\m\in\intg^{n_1+1}\times\cdots\times\intg^{n_r+1}\left|
  m_{ij}\geq 0,\ \sum_jm_{ij}=d_{ki}\ \forall i\right.\right\}
\end{equation*}
and write 
\begin{equation*}
  G_k(\x)=\sum_{\m\in M_k}s_{k\m}\x^\m
\end{equation*}
where the $s_{k\m}$ are algebraic integers.

If $K$ is a number field containing the coefficients of the polynomials
$G_k$ and $v$ is a place of $K$
we write $X(K)$ to denote the zero set of $F$ in $\mathcal P(K)$ and
$X(K_v)$ for the zero set of $F$ in $\mathcal P(K_v)$.  Let 
\begin{equation*}
  \Delta_v(K)=\{\x\in \mathcal P(K)\mid \|x_{ij}\|_v\leq 1\ \forall i,j\}
\end{equation*} 
and
\begin{equation*}
  \Delta(K_v)=\{\x\in \mathcal P(K_v)\mid \|x_{ij}\|_v\leq 1\ \forall i,j\}.
\end{equation*}
Then define
$X_v(K)_1=X(K) \cap \Delta_v(K)$ and $X(K_v)_1=X(K_v)\cap \Delta(K_v)$ and observe
that $X_v(K)_1\subset X(K_v)_1$.  Our first Lemma is an analog of Lemma 3.1 of
\cite{BeukersZagier}.

\begin{lem} \label{InitBound}
Suppose that $K$ is any number field containing the coefficients of the polynomials $G_k$, that
$v$ indexes the places of $K$, and that $a_k\geq 0$ for all $k$.  Set
\begin{equation*}
  w_i=\sum_ka_kd_{ki},\hspace{6mm}
  \log\lambda_v=-\max_{\x\in X(K_v)_1}\left\{\sum_ka_k\log\left\|
      G_k(\x)\right\|_v\right\}.
\end{equation*}
If $\x\in X(K)$ with $\prod_kG_k(\x)\ne 0$ then
\begin{equation*}
  \sum_{i=1}^r w_i\log H(\x_i) \geq \sum_{v|\infty}\frac{D_v}{D}\log\lambda_v.
\end{equation*}
\end{lem}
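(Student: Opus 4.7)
The plan is to integrate the product formula against the weights $a_k\geq 0$, and then at each place $v$ separately exploit the multihomogeneity of $F$ to move $\x$ into the unit box $\Delta(K_v)$ without leaving the zero set, and the multihomogeneity of the $G_k$ to extract a factor that reconstructs the heights $H(\x_i)$.

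Since $\prod_k G_k(\x)\ne 0$, each $G_k(\x)$ lies in $K^\times$, so the product formula applied to $G_k(\x)$ and weighted by $a_k$ gives
\begin{equation*}
0=\sum_k a_k\sum_v\log|G_k(\x)|_v=\sum_v\frac{D_v}{D}\sum_k a_k\log\|G_k(\x)\|_v.
\end{equation*}
For each place $v$, set $c_{iv}=\max_j\|x_{ij}\|_v$ and choose $\lambda_{iv}\in K_v^\times$ with $\|\lambda_{iv}\|_v=c_{iv}$ (for instance, a coordinate of $\x_i$ attaining the maximum). The rescaled point $\tilde\x^{(v)}$ with coordinates $x_{ij}/\lambda_{iv}$ lies in $\Delta(K_v)$, and multihomogeneity of $F$ forces $F(\tilde\x^{(v)})=0$, so $\tilde\x^{(v)}\in X(K_v)_1$. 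Multihomogeneity of $G_k$ in turn gives $\|G_k(\x)\|_v=\|G_k(\tilde\x^{(v)})\|_v\prod_i c_{iv}^{d_{ki}}$. Substituting into the displayed identity and invoking $\log H(\x_i)=\sum_v(D_v/D)\log c_{iv}$ rearranges everything into
\begin{equation*}
\sum_i w_i\log H(\x_i)=-\sum_v\frac{D_v}{D}\sum_k a_k\log\|G_k(\tilde\x^{(v)})\|_v.
\end{equation*}

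It remains to bound each local term. For non-Archimedean $v$, the $G_k$ have algebraic integer coefficients and $\tilde\x^{(v)}\in\Delta(K_v)$, so the ultrametric inequality yields $\|G_k(\tilde\x^{(v)})\|_v\leq 1$ and the corresponding summand on the right is nonnegative. For Archimedean $v$, the inclusion $\tilde\x^{(v)}\in X(K_v)_1$ combined with the definition of $\lambda_v$ gives $\sum_k a_k\log\|G_k(\tilde\x^{(v)})\|_v\leq -\log\lambda_v$, contributing at least $(D_v/D)\log\lambda_v$. Summing the two estimates produces the stated bound.

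The main obstacle is just the bookkeeping: one must verify that the rescaled point $\tilde\x^{(v)}$ lands in $X(K_v)_1$ (which is where multihomogeneity of $F$ is essential) and confirm that the multidegrees $d_{ki}$ of the auxiliary polynomials combine with the weights $a_k$ to assemble precisely into the coefficients $w_i$ on the left. Once this algebra is pinned down, the proof is the standard product-formula decomposition, with the discarded non-Archimedean terms having a favourable sign thanks to the $G_k$'s algebraic integer coefficients.
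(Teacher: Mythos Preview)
Your proof is correct and follows essentially the same approach as the paper: rescale $\x$ at each place by the coordinate of maximal absolute value so that the result lies in $X(K_v)_1$, use multihomogeneity of the $G_k$ to extract the scaling factors and reassemble the heights $H(\x_i)$, bound the non-Archimedean terms using the ultrametric inequality and the fact that the coefficients are algebraic integers, and bound the Archimedean terms directly from the definition of $\lambda_v$. The only cosmetic difference is that the paper first states a local inequality and then sums, whereas you start from the product formula and rearrange; the underlying computation is identical.
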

\begin{proof}
  We will prove that the local inequality
  \begin{align} \label{LocalInequality}
    \sum_{i=1}^r  w_i &\log(\max_j\|x_{ij}\|_v) \nonumber \\ 
    & \geq \sum_k a_k\log\|G_k(\x)\|_v+\left\{
      \begin{array}{ll}
        \log \lambda_v & \mathrm{if}\ v\mid\infty \\
        0 & \mathrm{if}\ v\nmid\infty
      \end{array}\right.
  \end{align}
  holds for all places $v$ of $K$.  

  We first assume that $v\nmid\infty$. Since each coefficient $s_{k\m}$ of $G_k$
  is an algebraic integer, we have that $\|s_{k\m}\|_v\leq 1$ for all $k, \m$.
  By the strong triangle inequality, there exists $\m\in M$ such that
  \begin{align*}
    \sum_k a_k\log\|G_k(\x)\|_v & \leq \sum_ka_k\log\|\x^\m\|_v \\
    & = \sum_ka_k\sum_id_{ki}\log\max_j\|x_{ij}\|_v\\
    & = \sum_iw_i\log\max_j\|x_{ij}\|_v
  \end{align*}
  and we have established \eqref{LocalInequality} in the case that $v\nmid\infty$.

  Next we assume that $v|\infty$.  For each $i$, let $j_0=j_0(i)$ be
  such that $\max_j\|x_{ij}\|_v = \|x_{ij_0}\|_v$.  Let
  $\x'$ be the point obtained by replacing each coordinate of $\x$ with $x_{ij}/x_{ij_0}$. 
  We have that $\|x_{ij}/x_{ij_0}\|_v\leq 1$ for all $i, j$ so that
  \begin{equation*}
    \sum_{i=1}^r w_i\log\max_j\left\|\frac{x_{ij}}{x_{ij_0}}\right\|_v
    \geq \sum_ka_k\log\left\|G_k\left(\x'\right)\right\|_v + \log\lambda_v
  \end{equation*}
  By the homogeneity of the polynomials $G_k$ we find that
  \begin{align*}
    \sum_ka_k\log\left\|G_k\left(\x'\right)\right\|_v
    & = \sum_ka_k\log\left\|\prod_ix_{ij_0}^{-d_{ki}}G_k(\x)\right\|_v \\
    & = \sum_ka_k\log\|G_k(\x)\|_v-\sum_i w_i\log\|x_{ij_0}\|_v
  \end{align*}
  and conclude that
  \begin{equation*} 
    \sum_{i=1}^r w_i\log(\max_j\|x_{ij}\|_v) \geq \sum_k a_k\log\|G_k(\x)\|_v + \log\lambda_v
  \end{equation*}
  so we have established \eqref{LocalInequality}.  Now sum both sides of \eqref{LocalInequality}
  over all places $v$ of $K$ and apply the product formula.  The desired result follows.
\end{proof}
  
Note that in the version of Lemma \ref{InitBound} that appears in \cite{BeukersZagier}, 
the polynomials $G_k$ are assumed to have integer coefficients.
Therefore, each $\lambda_v$ is in fact independent of $v$.  In this simpler situation, 
Beukers and Zagier define $\lambda_v$ using the usual absolute value on $\com$ rather than
$\|\cdot\|_v$ on $K_v$.  

In our version of Lemma \ref{InitBound} we allow for the $G_k$ to 
have any algebraic integer coefficients, so we must define $\lambda_v$ using $\|\cdot\|_v$
on a number field containing the coefficients of the $G_k$.  It is certainly possible that
$\lambda_v$ does indeed depend on the place $v$.
However, with appropriate choices for $G_k$ and $a_k$, conditions ${\it (i)}$ and ${\it (ii)}$ 
are enough to produce a universal lower bound on $\lambda_v$ that does not depend on $v$.
In view of Lemma \ref{InitBound}, this lower bound gives a bound on $\sum_{i=1}^{r}(n_i+1)\log H(\x_i)$.

Before we make selections for the $G_k$ and the $a_k$, we state Lemmas 3.2 and 3.3 
of \cite{BeukersZagier} for use later.  Although the statement of Lemma 3.2 in \cite{BeukersZagier}
is for polynomials with integer coefficients, it is easily
verified that the lemma holds for polynomials with complex coefficients and we state
this generalization here.

\begin{lem} \label{UnitCircle}
  Suppose $v$ is an Archimedean place of a number field $K$ with $D_v=2$.
  If $Q_k(\x)$ are multihomogeneous polynomials with coefficients in $K_v$ then
  the function  $\sum_ka_k\log\|Q_k(\x)\|_v$ assumes a maximum in $X(K_v)_1$
  at a point $\x$.  Moreover, $\x$ has one coordinate pair $(i_0,j_0)$ such that
  $\|x_{ij}\|_v=1$ for all $(i,j)\ne(i_0,j_0)$.
\end{lem}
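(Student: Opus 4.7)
The plan is as follows. Since $D_v=2$ and $v$ is archimedean, $K_v$ is identified with $\com$ and $\|\cdot\|_v$ with the usual modulus, so $\log\|Q_k(\x)\|_v=\log|Q_k(\x)|$. My first step is to establish that the supremum is attained. The set $X(K_v)_1$ is compact (a closed subset of the compact projective polydisc $\Delta(K_v)$), while $\phi(\x):=\sum_k a_k\log\|Q_k(\x)\|_v$ is upper semicontinuous into $[-\infty,+\infty)$, so $\phi$ attains its supremum $M$. If $M=-\infty$ the lemma is trivial; otherwise let $S\subseteq X(K_v)_1$ be the non-empty set of maximizers. Writing the multidegrees of $Q_k$ as $(d_{k1},\ldots,d_{kr})$, multihomogeneity gives
\begin{equation*}
\phi(\x_1,\ldots,\lambda_i\x_i,\ldots,\x_r)=\phi(\x)+\Bigl(\sum_k a_k d_{ki}\Bigr)\log|\lambda_i|,
\end{equation*}
so replacing $\x_i$ by $\lambda_i\x_i$ with $\lambda_i=1/\max_j\|x_{ij}\|_v\geq 1$ does not decrease $\phi$, and every maximizer may be assumed to satisfy $\max_j\|x_{ij}^*\|_v=1$ in every factor $i$.

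Among normalized maximizers, choose $\x^*\in S$ minimizing $N(\x):=\#\{(i,j):\|x_{ij}\|_v<1\}$. The conclusion of the lemma amounts to $N(\x^*)\leq 1$, so I argue by contradiction and suppose there are two distinct pairs $(i_1,j_1),(i_2,j_2)$ with $\|x_{i_\alpha j_\alpha}^*\|_v<1$. Consider the algebraic subvariety $Y=\{\x\in X(K_v):x_{ij}=x_{ij}^*\text{ for all }(i,j)\notin\{(i_1,j_1),(i_2,j_2)\}\}$ and let $Y_0$ be its irreducible component containing $\x^*$. Working in affine coordinates in which one unit-modulus coordinate per factor is set to $1$, the slice $Y$ is cut out from $\com^2$ by the single polynomial $F$, so $\dim Y_0\geq 1$. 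A positive-dimensional algebraic subvariety of $\com^2$ is unbounded, so the connectedness of the irreducible set $Y_0$ produces a point of $Y_0$ at which at least one of $\|x_{i_1 j_1}\|_v$ and $\|x_{i_2 j_2}\|_v$ equals $1$, while the two strictly interior coordinates of $\x^*$ keep $Y_0\cap\Delta(K_v)$ a compact neighborhood of $\x^*$ on which $\phi\leq M$.

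Finally, pulling back by any local holomorphic parametrization $z\mapsto\gamma(z)$ of $Y_0$ writes $\phi\circ\gamma(z)=\sum_k a_k\log|Q_k(\gamma(z))|$ as a non-negative linear combination of logarithms of moduli of holomorphic functions, hence as a subharmonic function on a disc. The strong maximum principle therefore forces $\phi|_{Y_0}$ to be constant equal to $M$, because $\x^*$ is an interior maximizer of $\phi$ on $Y_0\cap\Delta(K_v)$. Hence the boundary point produced above also belongs to $S$; its coordinates agree with $\x^*$ except at $(i_1,j_1)$ and $(i_2,j_2)$, it still satisfies $\max_j\|x_{ij}\|_v=1$ in every factor, yet its count $N$ is strictly smaller than $N(\x^*)$, contradicting minimality. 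The main obstacle I expect is rigorously applying the strong maximum principle across possible singularities of $Y_0$; I would handle this by pulling $\phi$ back to the normalization of $Y_0$ (or via Puiseux parametrization at singular points), so that $\phi$ becomes a subharmonic function on a connected Riemann surface where the standard one-variable theory applies directly.
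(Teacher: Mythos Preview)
The paper does not supply its own proof of this lemma: it is quoted as Lemma~3.2 of Beukers--Zagier, with only the remark that the argument there, stated for integer coefficients, goes through unchanged for complex coefficients. So there is nothing in the present paper to compare your proof against; you have reconstructed the result from scratch.

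Your argument is the standard subharmonicity proof and is essentially the one Beukers--Zagier give: compactness plus upper semicontinuity for existence of the maximum, multihomogeneity to normalize so that each factor has a coordinate of modulus $1$, then slicing through two interior coordinates to obtain a curve on which the maximum principle forces $\phi$ to be constant. The plan is sound and your treatment of possible singular points via the normalization is the right fix.

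Two small imprecisions are worth tightening. First, the strong maximum principle only gives constancy of $\phi$ on the connected component $C$ of (the lift of) $\x^*$ in the preimage of the open bidisc on $\tilde Y_0$, not on all of $Y_0$ as you write; outside the bidisc $\phi$ need not even be bounded above by $M$. Second, for the contradiction you need the boundary point $\x'$ to lie in $\overline{C}$, not merely on the topological boundary of $Y_0\cap\{\|x_{i_1j_1}\|_v<1,\ \|x_{i_2j_2}\|_v<1\}$, which may have other components disjoint from $C$. Both points are easily repaired by the same connectedness argument you invoke: $C$ is open in the connected set $\tilde Y_0$, so if $C$ were also closed it would equal $\tilde Y_0$ and $Y_0$ would be bounded, a contradiction; hence $\overline{C}\setminus C\neq\emptyset$, and any point there has both free coordinates in the closed disc with at least one on the circle. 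Upper semicontinuity of $\phi$ then gives $\phi(\x')\geq M$, hence $\phi(\x')=M$, which is the step that places $\x'$ in $S$ and delivers the contradiction with the minimality of $N(\x^*)$.
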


\begin{lem} \label{Calculus}
  let $\alpha,\beta,\gamma > 0$  Let $l$ be the unique minimum of the function
  \begin{equation*}
    u\log\frac{\gamma u}{u+v}+v\log\frac{v}{u+v}
  \end{equation*}
  under the constraints $u,v\geq 0$, $\alpha u+\beta v =1$.  Then $e^{-l}$ is the unique real
  root larger than $1$ of $\gamma^{-1}x^{-\alpha}+x^{-\beta}=1$.
\end{lem}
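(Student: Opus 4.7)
The plan is to attack the constrained minimization by Lagrange multipliers, after a preliminary simplification of the objective. First I would rewrite the function as
\begin{equation*}
  f(u,v) = u\log\gamma + u\log u + v\log v - (u+v)\log(u+v),
\end{equation*}
which is homogeneous of degree one in $(u,v)$ and has the clean partial derivatives $\partial_u f = \log\frac{\gamma u}{u+v}$ and $\partial_v f = \log\frac{v}{u+v}$. On the constraint line $\alpha u + \beta v = 1$ the Lagrange conditions then read $\partial_u f = \alpha\mu$ and $\partial_v f = \beta\mu$ for some multiplier $\mu$.

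The key observation is that homogeneity (Euler's identity) gives $u\partial_u f + v\partial_v f = f$, so at a critical point
\begin{equation*}
  l = f(u,v) = \mu(\alpha u + \beta v) = \mu.
\end{equation*}
Setting $t = e^{\mu} = e^{l}$, the Lagrange equations become $v/(u+v) = t^{\beta}$ and $\gamma u/(u+v) = t^{\alpha}$. Substituting $u/(u+v) = 1 - t^{\beta}$ into the second relation yields $\gamma(1 - t^{\beta}) = t^{\alpha}$, i.e., $\gamma^{-1}t^{\alpha} + t^{\beta} = 1$. With $x = 1/t = e^{-l}$ this is precisely $\gamma^{-1}x^{-\alpha} + x^{-\beta} = 1$, the desired equation.

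It remains to check that (a) the critical point lies in the interior of the constraint segment, (b) it is the unique minimum, and (c) the resulting $x$ satisfies $x > 1$. Point (c) is immediate because $g(x) = \gamma^{-1}x^{-\alpha} + x^{-\beta}$ is strictly decreasing from $+\infty$ to $0$ on $(0,\infty)$ with $g(1) = \gamma^{-1} + 1 > 1$, so the unique positive root exceeds $1$. Point (a) follows from the blow-up $\partial_u f \to -\infty$ as $u \to 0^+$ (and symmetrically as $v \to 0^+$), so $f$ strictly decreases as one moves into the interior from each endpoint. The main obstacle is (b): one needs strict convexity of $f$ restricted to the constraint line. I would verify this by computing $d^2f/du^2$ along the line and applying a Cauchy--Schwarz or AM--GM estimate to show that the positive contributions $1/u$ and $(\alpha/\beta)^2/v$ dominate the negative cross-term of order $(\beta-\alpha)^2/(\beta^2(u+v))$; once this convexity is in hand, the critical point is the unique minimum and its value is identified with $\mu$, completing the proof.
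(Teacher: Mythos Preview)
The paper does not actually prove this lemma: it is quoted verbatim as Lemma~3.3 of Beukers--Zagier \cite{BeukersZagier} and used without argument. So your proposal supplies strictly more than the paper does.

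Your argument is correct. The rewriting of $f$ and the observation that it is homogeneous of degree one are right, and the Euler-identity trick $f=u\partial_u f+v\partial_v f=\mu(\alpha u+\beta v)=\mu$ is exactly what makes the Lagrange system collapse to the stated polynomial relation in $x=e^{-l}$. Your endpoint analysis for (a) is also fine: parametrizing by $u$ with $v=(1-\alpha u)/\beta$, the one-variable derivative $\partial_u f-(\alpha/\beta)\partial_v f$ tends to $-\infty$ at $u=0^+$ and to $+\infty$ at $v=0^+$, so the minimum is interior.

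The only place you left a sketch is the convexity check (b), and it goes through cleanly. With $r=\alpha/\beta$, the Hessian entries $\partial_{uu}f=1/u-1/(u+v)$, $\partial_{uv}f=-1/(u+v)$, $\partial_{vv}f=1/v-1/(u+v)$ give
\[
\frac{d^2f}{du^2}=\frac{1}{u}+\frac{r^2}{v}-\frac{(1-r)^2}{u+v}.
\]
By Cauchy--Schwarz $(1/u+r^2/v)(u+v)\ge(1+r)^2$, so
\[
\frac{d^2f}{du^2}\ge\frac{(1+r)^2-(1-r)^2}{u+v}=\frac{4r}{u+v}>0,
\]
which gives strict convexity along the constraint line and hence uniqueness of the interior critical point as the minimum. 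With that detail filled in, your proof is complete.
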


We now make our selections for $G_k$ and $a_k$ following \cite{BeukersZagier}.
For $G_k$ we choose the coordinates $x_{ij}$ and the polynomial
\begin{equation*}
  \tilde F(\x) = F(\x^{-1})\prod_{i,j}x_{ij}^{d_{ij}}.
\end{equation*}
Note that $\tilde F$ is multihomogeneous with multidegrees given by
$\tilde d_i=-d_i+\sum_jd_{ij}$.  Write $a_{ij}$ and $b$ for the
values of $a_k$ corresponding to $x_{ij}$ and $\tilde F$, respectively.  
In this situation we have
\begin{equation} \label{lambda}
  \log\lambda_v = -\max_{\x\in X(K_v)_1}\left\{
    \sum_{i,j}a_{ij}\log\|x_{ij}\|_v+b\log\|\tilde F(\x)\|_v\right\}.
\end{equation}
Finally, let $\rho$ be the real root greater than $1$ of $x^{-2}+C_F^{-1}x^{-\delta} =1$.

\begin{lem} \label{FindLambda}
  Suppose $K$ is a number field containing the coefficients of $F$ and $D_v=2$ for all
  Archimedean places $v$ of $K$.
  Then there exist $a_{ij},b\geq 0$ such that $n_i+1=\sum_ja_{ij}+b\tilde d_i$ for
  all $i$ and $\lambda_v\geq\rho$ for all $v|\infty$.
\end{lem}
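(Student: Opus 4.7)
My plan is to choose weights $a_{ij},b$, apply Lemma \ref{UnitCircle} to localise the maximum defining $\lambda_v$, bound $\|\tilde F(\x)\|_v$ at the extremal point using $F(\x)=0$ and the totally real hypothesis, and finish by invoking Lemma \ref{Calculus}. For a suitable $b>0$, I would set $a_{ij}=1-b\tilde d_i/(n_i+1)$ (independent of $j$) for each $i\notin I$, and $a_{i0}=1$, $a_{i1}=1-b\tilde d_i$ for each $i\in I$, so that the exceptional coordinate carries a full weight of $1$. The constraint $n_i+1=\sum_ja_{ij}+b\tilde d_i$ is then immediate, and non-negativity of all $a_{ij}$ holds provided $b\leq 1/\delta$ and $b\tilde d_i\leq 1$ for every $i\in I$, both secured by a small enough $b$ using the bounds $\tilde d_i/(n_i+1)\leq\delta$ for $i\notin I$ and $\tilde d_i\leq 2\delta$ for $i\in I$ coming from the definition of $\delta$.

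Since $D_v=2$, Lemma \ref{UnitCircle} replaces the maximum in \eqref{lambda} by one over points $\x\in X(K_v)_1$ with $\|x_{ij}\|_v=1$ for every $(i,j)\neq(i_0,j_0)$ and $\|x_{i_0j_0}\|_v=e^{-t}$ for some $t\geq 0$. Writing $F$ as a polynomial $G(y)=\sum_ky^kF_k(\x_{\mathrm{rest}})$ in $y=x_{i_0j_0}$, the totally real hypothesis ensures $F_k(\x_{\mathrm{rest}}^{-1})=\overline{F_k(\x_{\mathrm{rest}})}$ at such a point, so
\begin{equation*}
|\tilde F(\x)|=|y|^{d_{i_0j_0}}|G(\bar y^{-1})|.
\end{equation*}
Using $G(y)=0$ I would expand $G(\bar y^{-1})=\sum_{k\geq 1}(\bar y^{-k}-y^k)F_k(\x_{\mathrm{rest}})$, apply the factorisation of $\bar y^{-k}-y^k$ together with the identity $|\bar y^{-1}-y|=e^t-e^{-t}$, and use the convexity estimate $e^{kt}-e^{-kt}\leq(k/d)(e^{dt}-e^{-dt})$ for $1\leq k\leq d=d_{i_0j_0}$ to derive
\begin{equation*}
|\tilde F(\x)|\leq\frac{1-e^{-2d_{i_0j_0}t}}{d_{i_0j_0}}\,c(F,v,i_0,j_0).
\end{equation*}

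For regular $(i_0,j_0)\notin E$ we may replace $c(F,v,i_0,j_0)$ by $C_F$; maximising $-a_{i_0j_0}t+b\log|\tilde F(\x)|$ over $t\geq 0$ then produces, after substitution, the expression appearing in Lemma \ref{Calculus} with $\alpha=\delta$, $\beta=2$, $\gamma=C_F$, plus a non-positive correction in our favour coming from a $-b\log d_{i_0j_0}$ term. Choosing $b$ so that $(b,a_{i_0j_0}/(2d_{i_0j_0}))$ sits appropriately relative to the minimiser of Lemma \ref{Calculus} then forces this maximum to be $\leq-\log\rho$, giving $\lambda_v\geq\rho$ in the regular case. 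The main obstacle is the exceptional case $(i_0,j_0)\in E$, where $c(F,v,i_0,j_0)$ depends genuinely on $v$ since exceptional monomials of $F$ may carry totally real algebraic integer coefficients that are not rational integers, so the bound $c\leq C_F$ fails; the extra summand $d_{i1}$ in the definition of $\delta$ is precisely what is designed to compensate, and the choice $a_{i_00}=1$ absorbs the corresponding contribution from the equation $F(\x)=0$. I would close the argument in this case by a direct analysis of the exceptional extremal points, using $F(\x)=0$ to control $|\tilde F(\x)|$ in terms of $|y|$ alone rather than the $v$-dependent $c(F,v,i_0,0)$, so that the bound $\rho$ is recovered uniformly in $v$.
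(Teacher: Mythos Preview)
Your overall strategy matches the paper's: fix weights satisfying the linear constraint, invoke Lemma~\ref{UnitCircle} to reduce to a one-parameter problem, bound $\|\tilde F(\x)\|_v$ using $F(\x)=0$ together with the totally real hypothesis (so that $\overline{F(\x)}=F(\bar\x)=0$), and optimise via Lemma~\ref{Calculus}. Your regular-case analysis is essentially correct; indeed your $\sinh$-type inequality $e^{kt}-e^{-kt}\le(k/d)(e^{dt}-e^{-dt})$ yields a bound at least as sharp as the paper's $\|\tilde F(\x)\|_v\le c(F,v,i_0,j_0)(1-\|x\|_v^2)$. The genuine gap is the exceptional case, which you leave as ``a direct analysis \dots\ in terms of $|y|$ alone''. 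No estimate on $\|\tilde F(\x)\|_v$ that is truly independent of the (possibly irrational, $v$-varying) coefficients of $F$ is available here, so this is not a routine step, and your choice $a_{i_00}=1$ does not by itself absorb anything coming from $F(\x)=0$.

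The paper's device is to trade the bad constant $c(F,v,i_0,0)$ for the good one $c(F,v,i_0,1)$. With $d=d_{i_00}$, $d'=d_{i_01}$, $m=m_{i_00}$, $m'=m_{i_01}$, the homogeneity relation $m+m'=d_{i_0}\ge d$ gives $d-m\le m'$; this replaces $\sum_\m (d-m)\|s_\m\|_v$ by $\sum_\m m'\|s_\m\|_v=c(F,v,i_0,1)\le C_F$, which is $v$-independent because $(i_0,1)$ is a regular index pair. The cost is an extra factor $\|x\|_v^{-d'}$ in the bound for $\|\tilde F(\x)\|_v$, so the effective weight in the one-variable problem becomes $a_{i_00}-b\,d'$, and one needs this to be $\ge 1-\delta b$. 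That forces $a_{i_00}$ to be \emph{raised} above the symmetric value; the paper (up to the visible sign slip in \eqref{AInI}) takes $a_{i0}=1-\tfrac{\tilde d_i-d_{i1}}{2}b\ge 1$ and $a_{i1}=1-\tfrac{\tilde d_i+d_{i1}}{2}b$ for $i\in I$, which makes $a_{i_00}-b\,d'=1-\tfrac{\tilde d_{i_0}+d_{i_01}}{2}b\ge 1-\delta b$ exactly by the definition of $\delta$. Your choice $a_{i_00}=1$, $a_{i_01}=1-b\tilde d_{i_0}$ instead gives $a_{i_00}-b\,d'=1-b\,d_{i_01}$, and the needed inequality $d_{i_01}\le\delta$ fails whenever $d_{i_00}<d_{i_0}$; so with your weights the exceptional case does not close in general.
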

\begin{proof}
  Following \cite{BeukersZagier}, we define each $a_{ij}$ in terms of $b$ by
  \begin{equation} \label{ANotInI}
    a_{ij} = 1-\frac{\tilde d_i}{n_i+1}b \hspace{3mm} \mathrm{if}\ i\not\in I
  \end{equation}
  and
  \begin{equation} \label{AInI}
    a_{ij}= 1-\frac{\tilde d_i+(-1)^jd_{i1}}{n_i+1}b \hspace{3mm} \mathrm{if}\ i\in I
  \end{equation}
  so we immdiately have $n_i+1=\sum_ja_{ij}+b\tilde d_i$.  Now we need only select $b$
  so that $\lambda_v\geq\rho$.
  
  We will show that under the assumptions \eqref{ANotInI} and \eqref{AInI} 
  \begin{equation} \label{FindLambdaBound1}
    -\log\lambda_v \leq  b\log\frac{2bC_F}{(1-\delta b)+2b}+\frac{1-\delta b}{2}\log
    \frac{1-\delta b}{(1-\delta b)+2b}
  \end{equation}
  holds for every Archimedean place $v$ of $K$.  Let
  \begin{equation*}
    \Phi(\x) = \sum_{i,j}a_{ij}\log\|x_{ij}\|_v+b\log\|\tilde F(\x)\|_v
  \end{equation*}
  so that we must give an upper bound on $-\log\lambda_v=\max_{\x\in X(K_v)_1}\Phi(\x)$.
  By Lemma \ref{UnitCircle} this maximum is attained at a point $\x\in X(K_v)_1$ where
  $\|x_{i_0j_0}\|_v\leq 1$ for some coordinate pair $(i_0,j_0)$ and $\|x_{ij}\|_v=1$
  for all $(i,j)\ne (i_0,j_0)$.  Hence, $\bar x_{ij}=x_{ij}^{-1}$ for all $(i,j)\ne (i_0,j_0)$.
  Moreover, $\Phi(\x)\to -\infty$ as $x_{ij}\to 0$ for any $i,j$.  Therefore, we must have
  $x_{i_0j_0}\ne 0$ so that the point $\x^{-1}$ is well defined.

  Suppose first that $(i_0,j_0)\not\in E$ and write $x=\xij$, $d=\dij$ and $m=\mij$
  for any $\m\in M$.  
  Let $\bar \x$ be the point obtained by replacing each coordinate of $\x$ with $\bar x_{ij}$.
  By property $(i)$, the coefficients of $F$ are in the fixed field of complex 
  conjugation in $K_v$.  Using $F(\x)=0$ we obtain
  \begin{align*}
    F(\x^{-1})
    & = F(\x^{-1}) - F(\bar \x) \\ 
    & = \sum_{\m\in M}s_\m \x^{-\m} - \sum_{\m\in M}s_\m \bar \x^{\m} \\
    & = \sum_{\m\in M}s_\m\left(\frac{\bar \x^\m}{\bar x^m}\right)(x^{-m}-\bar x^m)
  \end{align*}
  and note that $\|\bar \x^\m/\bar x^m\|_v=1$ for all $\m\in M$.  We now apply the triangle
  inequality to find
  \begin{align*}
    \|\tilde F(\x)\|_v
    & \leq \|x\|_v^d\sum_{\m\in M}\|s_\m(x^{-m}-\bar x^m)\|_v \\
    & \leq \|x\|_v^d\cdot\|x^{-1}-\bar x\|_v\sum_{\m\in M}m\|s_\m\|_v\cdot\|x^{-1}\|_v^{m-1} \\
    & \leq \|x\|_v^d\cdot\|x^{-1}-\bar x\|_v\cdot\|x\|_v^{1-d}\sum_{\m\in M}m\|s_\m\|_v \\
    & = c(F,v,i_0,j_0)(1-\| x\|_v^2),
  \end{align*}
  where the last equality follows since $c(F,v,i_0,j_0) = \sum_{\m\in M}m\|s_\m\|_v$.
  Now let $\xi=\| x\|_v^2$, $c=c(F,v,i_0,j_0)$ and $a=a_{i_0j_0}$.  We have
  \begin{equation*} \label{Diff}
    -\log\lambda_v \leq \max_{\xi\in[0,1]}\left(b\log(c(1-\xi))+\frac{a}{2}\log \xi\right).
  \end{equation*}
  Differentiating we find that this maximum is attained at $\xi_0 = a/(a+2b)$
  and its value is
  \begin{equation} \label{Bound1}
     b\log\frac{2bc}{a+2b}+\frac{a}{2}\log\frac{a}{a+2b}.
  \end{equation}
  By definition $a = 1-b\tilde d_i/(n_i+1)\geq1-\delta b$.
  Therefore \eqref{Bound1} is bounded above by
  \begin{equation*}
    b\log\frac{2bC_F}{(1-\delta b)+2b}+\frac{1-\delta b}{2}\log
    \frac{1-\delta b}{(1-\delta b)+2b}
  \end{equation*}
  and \eqref{FindLambdaBound1} follows.
  
  Next assume that $(i_0,j_0)\in E$ so that $j_0=0$.  We have that $\|x_{i_00}\| \leq 1$ and
  $\|x_{ij}\| = 1$ for all $(i,j)\ne(i_0,0)$.  We write $x=x_{i_00}$, $x'=x_{i_01}$,
  $d=d_{i_00}$, $d'=d_{i_01}$, $m=m_{i_00}$ and $m'=m_{i_01}$ for each $\m\in M$.
  Then we find
  \begin{align*}
    \|\tilde F(\x)\|_v
    & = \|x^dF(\x^{-1}) - \bar x^{-d}F(\bar\x)\|_v \\
    & = \left\|\sum_{\m\in M}s_\m\left(\frac{\bar\x^\m}{\bar x^m}\right)(x^{d-m}-\bar x^{m-d})\right\|_v \\
    & \leq \sum_{\m\in M}\|s_\m(x^{d-m}-\bar x^{m-d})\|_v \\
    & \leq \|x-\bar x^{-1}\|_v\cdot\sum_{\m\in M}(d-m)\cdot\|s_\m\|_v\cdot\|x^{-1}\|_v^{d-m-1}
  \end{align*}
  We know that $m+m'=d_{i_0}\geq d$ so $d-m\leq m'$.  Therefore, we obtain
  \begin{align*}
    \|\tilde F(\x)\|_v
    & \leq \|x-\bar x^{-1}\|_v\sum_{\m\in M}m'\|s_\m\|_v\cdot\|x\|_v^{1-m'} \\
    & \leq \|x-\bar x^{-1}\|_v\cdot\|x\|_v^{1-d'}\sum_{\m\in M}m'\|s_m\|_v \\
    & = (1-\|x\|_v^2)\cdot\|x\|_v^{-d'} c(F,v,i_0,1)
  \end{align*}
  Let $\xi=\|x\|_v^2$ and $c=c(F,v,i_0,1)$ so that
  \begin{equation} \label{SecondLambda}
    \log\lambda_v \leq \max_{\xi\in[0,1]}\left(b\log(c(1-\xi))-
      \frac{d_{{i_0}1}b}{2}\log\xi+\frac{a_{{i_0}0}}{2}\log \xi\right).
  \end{equation}
  With $a=a_{{i_0}0}-d'b$ we have that the right hand 
  side of \eqref{SecondLambda} equals
  \begin{equation*}
    b\log\frac{2bc}{a+2b}+\frac{a}{2}\log\frac{a}{a +2b}.
  \end{equation*}
  It follows from \eqref{AInI} that $a\geq 1-\delta b$ and 
  \eqref{FindLambdaBound1} holds.
  
  Finally, we select $b$ to make the right hand side of \eqref{FindLambdaBound1},
  which does not depend on $v$,
  as small as possible.  Then we make choices for $a_{ij}$ according to
  \eqref{ANotInI} and \eqref{AInI}.  We apply Lemma \ref{Calculus} with
  $\alpha=\delta, \beta=2, \gamma=C_F, u=b$ and $v=(1-\delta b)/2$. By the
  lemma, the right hand side of \eqref{FindLambdaBound1} has a unique minimum
  $l$ where $e^{-l}$ is the unique real root larger than $1$ of 
  $x^{-2}+C_Fx^{-\delta} = 1$.  Setting $\rho=e^{-l}$ we
  establish the lemma.
\end{proof}

\noindent {\it Proof of Theorem \ref{main}.}
Suppose $\x\in \mathcal P(\alg)$ and $K$ is a number field containing all coordinates 
of $\x$ and all coefficients of $F$ and has $D_v=2$ for all $v|\infty$.  Assume $a_{ij}, b$ are
the constants from Lemma \ref{FindLambda} and  $\lambda_v$ is defined as in \eqref{lambda}.
Since $x_{ij}$ and $\tilde F$ are multihomogeneous and $n_i+1=\sum_ja_{ij}+b\tilde d_i$, 
Lemma \ref{InitBound} implies that
\begin{equation*}
  \sum_{i=1}^{r}(n_i+1)\log H(\x_i) \geq\sum_{v|\infty}\frac{d_v}{d}\log\lambda_v
\end{equation*}
whenever $x_{ij}\ne 0$ for all $i,j$ and $F(\x^{-1})\ne 0$.
Then by Lemma \ref{FindLambda} we have $\lambda_v\geq\rho$ so that
\begin{equation*}
  \sum_{i=1}^{r}(n_i+1)\log H(\x_i) \geq \sum_{v|\infty}\frac{d_v}{d}\log\rho = \log\rho.
\end{equation*}
\qed

\section{Acknowledgment}
The author wishes to thank Professor J. D. Vaaler for many useful discussions regarding this work.


\begin{thebibliography}{}
\bibitem{BeukersZagier} F. Beukers and D. Zagier, 
  {\it  Lower bounds of heights of points on hypersurfaces},
  Acta Arith.  {\bf 79}  (1997),  no. 2, 103--111.
\bibitem{Dobrowolski} E. Dobrowolski, 
  {\it  On a question of Lehmer and the number of irreducible factors of a polynomial},
  Acta Arith.  {\bf 34} (1979),  no. 4, 391--401.
\bibitem{Lehmer} D.H. Lehmer, {\it Factorization of certain cyclotomic functions}, 
  Ann. of Math. {\bf 34} (1933), 461--479.
\bibitem{Schinzel} A. Schinzel,
  {\it On the product of the conjugates outside the unit circle of an algebraic number},
  Acta Arith. {\bf 24} (1973), 385--399. Addendum, ibid.  {\bf 26} (1975), no. 3, 329--331.
\bibitem{Smyth} C.J. Smyth, 
  {\it  On the product of the conjugates outside the unit circle of an algebraic integer},
  Bull. London Math. Soc.  {\bf 3}  (1971), 169--175.
\bibitem{Zagier}D. Zagier, {\it Algebraic numbers close to both 0 and 1},
  Math. Comp. {\bf 61} (1993), 485--491.
\bibitem{Zhang} S. Zhang, {\it Positive line bundles on arithmetic surfaces},
  Ann. of Math. {\bf 136} (1992), 569--587.
\end{thebibliography}
\end{document}